\newtheorem{thm}{Theorem} 
\newtheorem{lemma}[thm]{Lemma}
\newtheorem{claim}[thm]{Claim}
\newcommand{\h}{\mathcal{H}}
\newcommand{\R}{\mathcal{R}}
\newcommand{\abs}[1]{\left\lvert{#1}\right\rvert}
\title{
	On the weight of Berge-$F$-free hypergraphs
}
\begin{document}
	\author{
		Sean English \thanks{Ryerson University, Toronto, Canada. E-mail: \texttt{sean.english@ryerson.ca}} 
		\qquad
		D\'aniel Gerbner\thanks{Alfr\'ed R\'enyi Institute of Mathematics, Hungarian Academy of Sciences. E-mail: \texttt{gerbner@renyi.hu}}
		\qquad
		Abhishek Methuku\thanks{\'Ecole Polytechnique F\'ed\'erale de Lausanne, Switzerland. E-mail: \texttt{abhishekmethuku@gmail.com}}
		\qquad \\
		Cory Palmer\thanks{University of Montana, Missoula, Montana 59812, USA. E-mail: \texttt{cory.palmer@umontana.edu} }}
	
	\date{
		\today}
	\maketitle
	
	\begin{abstract}
		For a graph $F$, we say a hypergraph is a Berge-$F$ if it can be obtained from $F$ by replacing each edge of $F$ with a hyperedge containing it. A hypergraph is Berge-$F$-free if it does not contain a subhypergraph that is a Berge-$F$. The weight of a non-uniform hypergraph $\h$ is the quantity $\sum_{h \in E(\h)} \abs{h}$.
		
		Suppose $\h$ is a Berge-$F$-free hypergraph on $n$ vertices. In this short note, we prove that as long as every edge of $\h$ has size at least the Ramsey number of $F$ and at most $o(n)$, the weight of $\h$ is $o(n^2)$. This result is best possible in some sense. Along the way, we study other weight functions, and strengthen results of Gerbner and Palmer; and Gr\'osz, Methuku and Tompkins.
	\end{abstract}

\section{Introduction}

Generalizing the notion of hypergraph cycles due to Berge, the authors Gerbner and Palmer \cite{Gerbner_Palmer} introduced  so-called \textit{Berge hypergraphs}. Given a graph $F$, we say that a hypergraph $\h$ is Berge-$F$ if there is a bijection $f:E(F)\rightarrow E(\h)$ such that for every $e\in E(F)$ we have $e\subseteq f(e)$. Equivalently, $\h$ is Berge-$F$ if we can embed a distinct graph edge into each hyperedge of $\h$ to obtain a copy of $F$. Note that for a fixed $F$ there are many different hypergraphs that are Berge-$F$, and a fixed hypergraph $\h$ can be Berge-$F$ for many different graphs $F$. 

We say that a hypergraph is Berge-$F$-free if it does not contain a subhypergraph that is Berge-$F$. There are several results concerning the largest size of Berge-$F$-free hypergraphs, see e.g. \cite{anssal,FuLa,GMP,gmv,Gerbner_Palmer, G_P2,GMT,Gyori_triangle,Gyori_Lemons,GyLe4,GyLe,GyKaLe,Palmer}. For a short survey of extremal results for Berge hypergraphs see Subsection 5.2.2 in \cite{gp}.

Most of these results deal with the uniform case, but some also examine non-uniform hypergraphs. Note that replacing a hyperedge with a larger hyperedge containing it never removes a copy of Berge-$F$, but may add a copy. Thus, to build a Berge-$F$-free hypergraph that maximizes the number of hyperedges, one picks small hyperedges. To make large hyperedges more attractive, one can assign a weight to each hyperedge that increases with the size of the hyperedge. 

Gy\H ori \cite{Gyori_triangle} proved that if $\h$ is a Berge-triangle-free hypergraph, then $\sum_{h\in E(\h)}(\abs{h}-2)\le n^2/8$ if $n$ is large enough. Note that this result is about a multi-hypergraph $\h$, thus $\sum_{h\in E(\h)}\abs{h}$ can be arbitrarily large by taking a hyperedge of size $2$ an arbitrary number of times. In \cite{GyLe4}, the authors showed that for a Berge-$C_4$-free multi-hypergraph $\h$ we have $\sum_{h\in E(\h)}(\abs{h}-3)\le 12\sqrt{2}n^{3/2}+O(n)$ and they gave a construction of a Berge-$C_4$-free multi-hypergraph with approximately $n^{3/2}/8$ hyperedges. The upper bound was improved by Gerbner and Palmer \cite{Gerbner_Palmer} to $\sqrt{6}n^{3/2}/2$, while the lower bound was improved to $(1+o(1))n^{3/2}/({3\sqrt{3}})$.
For arbitrary cycles, Gy\H ori and Lemons \cite{GyLe} proved that if $\h$ is either a Berge-$C_{2k}$-free or Berge-$C_{2k+1}$-free hypergraph on $n$ vertices and every hyperedge in $\h$ has size at least $4k^2$, then $\sum_{h\in E(\h)}\abs{h}=O(n^{1+1/k})$.

Gerbner and Palmer \cite{Gerbner_Palmer} proved the following general result about Berge-$F$-free hypergraphs.



\begin{thm}[Gerbner and Palmer \cite{Gerbner_Palmer}]
	\label{gp_size}
Let $F$ be a graph and let $\h$ be a Berge-$F$-free hypergraph on $n$ vertices. If every hyperedge in $\h$ has size at least $|V(F)|$, then $\sum_{h \in E(\h)} \abs{h} = O(n^2)$.
\end{thm}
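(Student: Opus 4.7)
Let $r := |V(F)|$, $\varepsilon := |E(F)|$, and for every hyperedge $h \in E(\h)$ set $q_h := |h| - r + 1 \geq 1$. The goal is to construct an $F$-free auxiliary graph $G$ on $V(\h)$ whose size is $\sum_h q_h$, and then convert the Tur\'an bound $\ex(n,F) = O(n^2)$ into the desired weight bound via
\[
\sum_h |h| \;=\; \sum_h q_h \;+\; (r-1)\,|E(\h)| \;\le\; r \sum_h q_h.
\]

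The main combinatorial ingredient is the inequality $\ex(s, F) \leq \binom{s}{2} - (s - r + 1)$ for every $s \geq r$. Indeed, if $R \subseteq \binom{[s]}{2}$ has $|R| \leq s - r$, then $R$ has vertex cover $\tau(R) \leq |R| \leq s - r$, so $\alpha(R) \geq s - \tau(R) \geq r$; the corresponding $r$ vertices span a clique in $K_s \setminus R$ and therefore contain a copy of $F$. Hence one must remove at least $s - r + 1$ edges from $K_s$ to destroy every copy of $F$, which rearranges to the stated inequality.

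Using this, I would build $G$ greedily: process the hyperedges in an arbitrary order, and at step $h$ select $q_h$ edges from $\binom{h}{2}$ not yet chosen, adding them to $G$ and labelling them by $h$. As long as the current $G$ is $F$-free, its restriction to $\binom{h}{2}$ has size at most $\ex(|h|, F) \le \binom{|h|}{2} - q_h$, so at least $q_h$ fresh edges remain in $\binom{h}{2}$ and the greedy never gets stuck. To justify keeping $G$ $F$-free, suppose a copy of $F$ appears in $G$ with edges $e_1, \dots, e_\varepsilon$; each $e_i$ is labelled by some hyperedge $h(e_i)$ containing it. If the labels $h(e_i)$ are pairwise distinct, this is already a Berge-$F$ in $\h$, contradicting the hypothesis. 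To rule out coincident labels, I would invoke Hall's theorem on the bipartite graph between the $\varepsilon$ edges of the $F$-copy and the hyperedges of $\h$ that contain them, verifying Hall's condition from the obvious fact (used again in the rest of the paper) that no $r$-subset of $V(\h)$ lies in $\varepsilon$ hyperedges, since that would immediately yield a Berge-$F$. Once $G$ is $F$-free, $\sum_h q_h = |E(G)| \leq \ex(n,F) = O(n^2)$, and the calculation above concludes.

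The main obstacle will be the last step: the greedy assigns one label to each edge of $G$, but two edges of an $F$-copy may share that label, so the Berge-$F$ does not drop out immediately. Making the Hall-type rearrangement rigorous is the delicate part; one may either restrict the greedy to pick structured families of edges inside each hyperedge (e.g.\ matchings or stars) that automatically prevent two edges of any $F$-copy from sharing an assigner, or carry out an explicit codegree-based verification of Hall's condition. Either way, the core combinatorial input is the elementary bound $\ex(s,F) \leq \binom{s}{2} - (s-r+1)$, which drives the entire argument.
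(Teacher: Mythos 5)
Your high-level plan---build an $F$-free auxiliary graph $G$ by charging $q_h := |h|-r+1$ edges to each hyperedge $h$, then apply $\ex(n,F)=O(n^2)$---is in the spirit of the original Gerbner--Palmer argument, and the arithmetic reductions ($\sum_h |h| \le r\sum_h q_h$, and the elementary bound $\ex(s,F)\le\binom{s}{2}-(s-r+1)$ via a vertex-cover argument) are both correct. The paper itself sidesteps this route entirely: it calls a pair of $\Gamma(\h)$ \emph{blue} if it lies in at most $|E(F)|-1$ hyperedges, observes that every copy of $F$ in $\Gamma(\h)$ must use a blue edge (else one greedily extracts a Berge-$F$), applies the Erd\H{o}s--Stone--Simonovits bound inside each $\Gamma(h)$ to get $\beta\binom{|h|}{2}$ blue pairs per hyperedge, and double-counts---no auxiliary graph is ever constructed. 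That counting proof is both shorter and actually gives the stronger conclusion $\sum_h |h|^2 = O(n^2)$.

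The step you flag as delicate is, in fact, a genuine gap, and I do not think the fixes you sketch close it. The core difficulty is that a copy of $F$ can appear inside $G$ using edges all labelled by a \emph{single} hyperedge, in which case no rearrangement of labels can produce a Berge-$F$, and Hall's condition simply fails. Concretely, take $F=K_3$ and $\h$ consisting of the single hyperedge $h=\{1,2,3,4,5\}$. Then $q_h=3$, and nothing stops the greedy from choosing $\{1,2\},\{2,3\},\{1,3\}$ as its three fresh edges, putting a triangle in $G$ even though $\h$ (having only one hyperedge) is trivially Berge-$K_3$-free. For the set $T$ of all three triangle edges, only one hyperedge of $\h$ meets $T$, so Hall's condition $|N(T)|\ge|T|$ is violated---the ``obvious fact'' about $r$-subsets not lying in $|E(F)|$ hyperedges gives no lower bound on $|N(T)|$ here. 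Restricting the greedy to matchings fails because a matching of size $q_h=|h|-r+1$ cannot even fit inside $h$ once $|h|>2(r-1)$; restricting to stars fails because a maximum star in the complement of $G\cap\binom{h}{2}$ has only about $\beta|h|$ edges, which is smaller than $q_h\approx|h|$ for any $\beta<1$; and a ``smart'' greedy that only adds edges keeping $G$ $F$-free can get stuck when $G\cap\binom{h}{2}$ is maximal $F$-free but far from extremal (e.g.\ a star is maximal triangle-free yet has only $|h|-1$ edges, leaving room under $\binom{|h|}{2}-q_h$). So the proposal as written does not establish that $G$ is $F$-free, and the final inequality $|E(G)|\le\ex(n,F)$ is unsupported. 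To make an argument along these lines work you would need a substantially more careful charging scheme; alternatively, the paper's blue-edge double-count avoids the problem altogether.
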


We strengthen Theorem \ref{gp_size} in Theorem \ref{higheruniformity} by showing that the statement still holds if one replaces $\abs{h}$ with $\abs{h}^2$ in the above sum; moreover, our proof is much simpler compared to the proof of Gerbner and Palmer in \cite{Gerbner_Palmer}. For uniform hypergraphs, the above theorem states that for any graph $F$ and Berge-$F$-free $r$-uniform $n$-vertex hypergraph $\h$ we have $\abs{E(\h)}=O(n^2)$ provided $r$ is large enough.
Gr\'osz, Methuku and Tompkins showed that, in fact, $\abs{E(\h)}=o(n^2)$ for large enough $r$. This is stated more precisely in the following theorem. Given two graphs $F$ and $G$, let $R(F,G)$ denote the $2$-color Ramsey number of $F$ and $G$. If $e\in E(F)$, then we write $F\setminus e$ for the graph with $V(F\setminus e)=V(F)$ and $E(F\setminus e)=E(F)\setminus\{e\}$. 

\begin{thm}[Gr\'osz, Methuku and Tompkins \cite{GMT}]
	\label{gmt_threshold}
Let $F$ be a fixed graph and $e \in E(F)$. Let $\h$ be an $r$-uniform Berge-$F$-free hypergraph. If $r \ge R(F,F\setminus e)$, then $\abs{E(\h)} = o(n^2)$. 
\end{thm}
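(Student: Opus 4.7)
The plan is to argue by contradiction and to refine the greedy approach behind Theorem \ref{gp_size}, exploiting the Ramsey hypothesis $r \ge R(F, F\setminus e)$ to force a Berge-$F$ substructure whenever the hyperedge density is too large. Suppose, for contradiction, that $|E(\h)| \ge \epsilon n^2$ for some $\epsilon > 0$ and infinitely many $n$; the goal is to extract a Berge-$F$ subhypergraph.

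Process the hyperedges $h_1, h_2, \ldots$ of $\h$ in some order, maintaining an auxiliary graph $G$ on $V(\h)$ that is kept $F$-free throughout. For each $h_i$, attempt to add a pair $p \in \binom{h_i}{2} \setminus E(G)$ to $G$ without creating a copy of $F$; declare $h_i$ a \emph{success} if this is possible and a \emph{failure} otherwise. If every step is a success, then $|E(G)|=|E(\h)|$ and so $\epsilon n^2 \le \ex(n,F)$. Since $\ex(n,F)=\Theta(n^2)$ whenever $\chi(F)\ge 3$, this alone only reproduces the $O(n^2)$ bound of Theorem \ref{gp_size}; the Ramsey hypothesis must be invoked to control the failures (or to further restrict the successes).

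At a failure on $h_i$, two-color the pairs in $\binom{h_i}{2}$ with red for pairs in $E(G)$ and blue for the rest. Since $G$ is $F$-free there is no red copy of $F$, and $r \ge R(F, F\setminus e)$ then forces a blue copy of $F\setminus e$ on some $|V(F)|$-vertex subset $V_0 \subseteq h_i$, with missing edge $xy \in \binom{h_i}{2}$. The failure condition also says that every blue pair, when added to $G$, completes some $F$-copy using only edges of $G$.

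The main obstacle is turning this local Ramsey information into a global Berge-$F$ in $\h$ using $|E(F)|$ distinct hyperedges. I intend to split into two cases according to the color of $xy$. If $xy$ is red, then $xy$ was added to $G$ by some earlier successful hyperedge $h_j \neq h_i$, supplying a distinct witness for that edge; combined with $h_i$, which contains the remaining $|E(F)|-1$ blue edges of the $F\setminus e$ on $V_0$, we need to find $|E(F)|-2$ further distinct hyperedges to witness the other blue edges. If $xy$ is blue, the failure condition produces an $F\setminus xy$ sitting inside $G$, whose edges come from $|E(F)|-1$ distinct earlier successes, and we pair this with $h_i$ as the witness for $xy$. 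In either case, the delicate step is verifying Hall's condition on the bipartite graph between the $|E(F)|$ edges of the emerging $F$-copy and the hyperedges of $\h$ containing them; the density $|E(\h)| \ge \epsilon n^2$ should force most pairs of the shadow to lie in many hyperedges, making Hall's condition checkable by a standard counting/supersaturation argument and yielding the desired Berge-$F$ and contradiction.
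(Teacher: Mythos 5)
Your greedy approach has a genuine gap: it cannot get past $O(n^2)$. If all steps of the greedy process succeed, you have built an $F$-free graph $G$ with $|E(G)| = |E(\h)|$, which gives $|E(\h)| \le \ex(n,F) = \Theta(n^2)$ when $\chi(F) \ge 3$ --- you acknowledge this yourself. The trouble is that your subsequent Ramsey argument on failures does not rescue the situation, because under your greedy rule there simply \emph{are} no failures once $r \ge |V(F)|$: if $h_i$ fails, then $\binom{h_i}{2}\not\subseteq E(G)$ (otherwise $\Gamma(h_i)$ would be a clique of size $\ge |V(F)|$ inside the $F$-free graph $G$), so one can pick any pair $p \in \binom{h_i}{2}\setminus E(G)$; adding $p$ creates an $F$-copy $F_0$ whose remaining $|E(F)|-1$ edges lie in $G$ and hence each has a unique distinct source hyperedge $\neq h_i$, and these together with $h_i$ (assigned to $p$) form a Berge-$F$ --- no Ramsey needed, and no need to locate an $F\setminus e$. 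Since the failure analysis is vacuous, your argument collapses back to the $O(n^2)$ bound. Moreover, the part you flag as delicate --- verifying Hall's condition for the blue edges in the ``$xy$ red'' case by a density/supersaturation argument --- is not salvageable as stated: being blue only means a pair is not in $G$, and a pair lying in $h_i$ need not lie in any other hyperedge, so $|E(\h)|\ge\epsilon n^2$ gives no lower bound on the codegree of the particular pairs you need to represent.

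The paper's route (via Theorem \ref{loweruniformity}) is fundamentally different and stronger: it works on the $2$-shadow $\Gamma(\h)$ and invokes the \emph{graph removal lemma}. One first shows the number of $F$-copies in $\Gamma(\h)$ is $o(n^{|V(F)|})$ --- any such copy must have at least two of its edges inside a single hyperedge (else those hyperedges already form a Berge-$F$), and a counting argument using $\sum_h |h|^2 = O(n^2)$ and $|h| = o(n)$ (here $r$ is fixed) gives $\sum_h |h|^3 = o(n^3)$. The removal lemma then yields a set $\R$ of $o(n^2)$ edges meeting every $F$-copy. The Ramsey hypothesis enters at a different point: for any $R(F,F\setminus e)$-subset $S$ of a hyperedge $h$, coloring pairs by membership in $\R$ and using that $\Gamma(S)\setminus\R$ is $F$-free forces an $F\setminus e$ in $\Gamma(S)\cap\R$; if none of these $\R$-edges were ``special'' (i.e., lying in few hyperedges), one could complete a Berge-$F$ using $h$ itself for the missing edge $\hat e \subseteq S \subseteq h$. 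This shows $\Gamma(h)\setminus\R_s$ is $K_{R(F,F\setminus e)}$-free, so $\Gamma(h)$ contains $\Omega(|h|^2)$ special edges, and since special edges total $o(n^2)$ with bounded multiplicity, $\sum_h |h|^2 = o(n^2)$. The removal lemma (or a comparably powerful sparsification tool) is what converts $O(n^2)$ into $o(n^2)$; a greedy edge-by-edge embedding alone cannot do this.
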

We improve this theorem in Theorem \ref{loweruniformity}. 
Let us return to non-uniform hypergraphs. So far, we have only added up the sizes of the hyperedges. Here we will change the weight function and consider first $\sum_{h \in E(\h)} \abs{h}^2$.

\begin{thm} \label{higheruniformity}
Let $F$ be a fixed graph. Let $\h$ be a Berge-$F$-free hypergraph on $n$ vertices such that every edge of $\h$ has size at least $\abs{V(F)}$. Then 
\[
\sum_{h \in E(\h)} \abs{h}^2 = O(n^2).
\]

\end{thm}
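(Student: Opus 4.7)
I would prove this by combining Theorem \ref{gp_size} with a simple observation about $f$-subset incidences, where $f := |V(F)|$.

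The key observation is: for any $f$-element subset $T \subseteq V(\h)$, at most $|E(F)| - 1$ hyperedges of $\h$ contain $T$. Indeed, $K_f \supseteq F$, so if $|E(F)|$ hyperedges contained $T$, one could embed $F$ on the vertices of $T$ and assign each edge of $F$ to a distinct hyperedge through $T$, producing a Berge-$F$ in $\h$ and contradicting the hypothesis. Double-counting incidences between hyperedges and $f$-subsets then yields
\[
\sum_{h \in E(\h)} \binom{|h|}{f} \le (|E(F)| - 1) \binom{n}{f}.
\]

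With this bound and Theorem \ref{gp_size} (which gives $\sum_h |h| = O(n^2)$) in hand, I would bound $\sum_h |h|^2$ by splitting hyperedges according to their size. For hyperedges with $|h| \le C$ for some suitable constant $C = C(F)$, Theorem \ref{gp_size} immediately gives $\sum_{|h| \le C} |h|^2 \le C \sum_h |h| = O(n^2)$. For larger hyperedges, the $f$-subset incidence bound exerts strong control: since $\binom{|h|}{f}$ grows like $|h|^f$ whereas $|h|^2$ grows only quadratically, for $|h|$ sufficiently large the ratio $|h|^2/\binom{|h|}{f}$ becomes very small, and the incidence bound forces the overall contribution to $\sum |h|^2$ from large hyperedges to be $O(n^2)$ as well. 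A suitable choice of the threshold $C$, together with a bookkeeping argument comparing $|h|^2$ with $\binom{|h|}{f}$ across size classes, should close the estimate.

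The main obstacle I anticipate is the large-hyperedge case. A naive application of the inequality $|h|^2 \le f^2 \binom{|h|}{f}$ (which is tight at $|h|=f$ and slack elsewhere) yields only $\sum_h |h|^2 = O(n^f)$, too weak for $f \ge 3$. To recover $O(n^2)$, one must exploit finer structural information, for instance by applying the key observation to $f$-subsets that contain a fixed pair $\{u,v\}$, obtaining a per-pair bound of the form $\sum_{h \ni u,v} \binom{|h|-2}{f-2} \le (|E(F)|-1)\binom{n-2}{f-2}$ that can be summed over pairs. An alternative route is an induction on $n$ via random vertex removal, controlling the expected drop in $\sum |h|^2$ through Theorem \ref{gp_size} and the bound on $f$-uniform hyperedges supplied by the key observation.
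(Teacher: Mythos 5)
The central quantity your proposal controls is $\sum_{h}\binom{|h|}{f}$ with $f=|V(F)|$, via the (correct) observation that no $f$-set lies in $|E(F)|$ hyperedges. But this bound is intrinsically $O(n^f)$, and for $f\ge 3$ there is no way to convert it into the required $O(n^2)$: in the $f$-uniform case each summand $\binom{|h|}{f}$ equals $1$, so the incidence bound reads $|E(\h)|\le(|E(F)|-1)\binom{n}{f}=O(n^f)$, a factor $n^{f-2}$ worse than the $O(n^2)$ you need for $\sum_h|h|^2=f^2|E(\h)|$. Your ``per-pair'' refinement $\sum_{h\ni u,v}\binom{|h|-2}{f-2}\le(|E(F)|-1)\binom{n-2}{f-2}$, summed over all pairs, reproduces exactly the same inequality (times $\binom{f}{2}$) and again permits $\Theta(n^{f-2})$ codegree at every pair, so it does not close the gap. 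The threshold split also cannot work: bounding $\sum_{|h|\le C}|h|^2\le C\sum_h|h|=O(Cn^2)$ requires $C=O(1)$, while bounding $\sum_{|h|>C}|h|^2$ via $|h|^2\le f^f C^{-(f-2)}\binom{|h|}{f}$ and the incidence bound needs $C=\Omega(n)$, so no threshold serves both sides. The random-removal induction is sketched too vaguely to evaluate, but the natural recursion gives $\mathbb{E}[S']\le S(1-2/n)+\tfrac1n\sum_h|h|$, an upper bound on $S'$ in terms of $S$, which runs in the wrong direction for the induction and does not yield an upper bound on $S$.

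The missing ingredient is a statement that each hyperedge $h$ contains $\Omega(|h|^2)$ pairs of bounded codegree — a claim at the level of pairs, not $f$-sets. The paper obtains this by declaring a pair ``blue'' if it lies in at most $|E(F)|-1$ hyperedges, observing that the non-blue pairs inside $\Gamma(h)$ form an $F$-free graph (else one could greedily build a Berge-$F$), and invoking a uniform Erd\H{o}s--Stone--Simonovits bound (Lemma \ref{ErdStoneSim}) to conclude that a positive proportion $\beta(F)$ of the $\binom{|h|}{2}$ pairs inside $h$ are blue. Summing $\beta\binom{|h|}{2}$ over $h$ and noting each blue pair is counted at most $|E(F)|-1$ times gives $O(n^2)$ directly. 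Your approach never exploits the structure of $F$ beyond its vertex count, and that is precisely the information Erd\H{o}s--Stone--Simonovits supplies; without something of that strength the argument cannot reach $O(n^2)$.
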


Furthermore, this result is trivially sharp as can be seen by considering any hypergraph with at least one edge of size $\Omega(n)$. Interestingly, the next theorem shows that either small or large edges are necessary for such a weighted sum to reach this upper bound.

\begin{thm}
	\label{loweruniformity}
	Let $F$ be a fixed graph and let $e\in E(F)$. Let $\h$ be a Berge-$F$-free hypergraph on $n$ vertices such that every edge of $\h$ has size at least $R(F, F \setminus e)$ and at most $o(n)$. Then 
	\[ 
	\sum_{h \in E(\h)} \abs{h}^2 = o(n^2).
	\]
\end{thm}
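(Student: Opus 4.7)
The plan is to combine a dyadic decomposition of the hyperedges with a Ramsey-style multiplicity argument, and apply Theorem~\ref{gmt_threshold} to each resulting uniform slice. Set $R = R(F,F\setminus e)$ and $M = |E(F)|$, and write $s(n) = o(n)$ for the upper bound on edge sizes. Partition $E(\h) = \bigsqcup_j \h_j$ where $\h_j = \{h : 2^j \le |h| < 2^{j+1}\}$, with $j$ ranging over $[\lceil\log_2 R\rceil,\lceil\log_2 s(n)\rceil]$. For each $j$, choose an arbitrary $2^j$-subset $\phi(h) \subseteq h$ for every $h \in \h_j$, and let $\widetilde{\h_j}$ be the simple $2^j$-uniform hypergraph $\{\phi(h) : h \in \h_j\}$.

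The key observation is a Ramsey-type multiplicity bound: any $2^j$-set $T$ is contained in at most $M - 1$ edges of $\h_j$. Otherwise $M$ distinct edges $h_1,\ldots,h_M \in \h_j$ share a common $2^j$-subset $T$, and since $|T| = 2^j \ge R \ge |V(F)|$, one can embed a copy of $F$ on $|V(F)|$ of the vertices of $T$ and use the distinct $h_i$ to realize the $M$ edges of $F$, producing a Berge-$F$ in $\h$ and contradicting the hypothesis. Hence $|\h_j| \le (M-1)|\widetilde{\h_j}|$. Since $\widetilde{\h_j}$ is a simple $2^j$-uniform Berge-$F$-free hypergraph with $2^j \ge R$, Theorem~\ref{gmt_threshold} gives $|\widetilde{\h_j}| = o(n^2)$.

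Finally, $\sum_{h \in E(\h)} |h|^2 \le 4\sum_j 4^j |\h_j| \le 4(M-1)\sum_j 4^j |\widetilde{\h_j}|$. A useful supplementary fact is that the disjoint union $\widetilde{\h} := \bigsqcup_j \widetilde{\h_j}$ is itself a Berge-$F$-free hypergraph with edge sizes $\ge R \ge |V(F)|$: distinct sets of $\widetilde{\h}$ have disjoint $\phi$-preimages in $E(\h)$, so any Berge-$F$ in $\widetilde{\h}$ lifts to one in $\h$. Thus Theorem~\ref{higheruniformity} provides the dominating bound $\sum_j 4^j|\widetilde{\h_j}| = \sum_{T \in \widetilde{\h}}|T|^2 = O(n^2)$.

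The main obstacle is summing the pointwise $o(n^2)$ estimates across the $\Theta(\log n)$ dyadic classes without losing a logarithmic factor. I would handle this by a dominated-convergence-style truncation argument: fix $\epsilon > 0$, split at a threshold $J = J(\epsilon)$, apply Theorem~\ref{gmt_threshold} to the finitely many classes $j < J$ to make their contribution $\le \tfrac{\epsilon}{2}n^2$ for $n$ sufficiently large, and control the tail $\sum_{j \ge J} 4^j|\widetilde{\h_j}|$ by combining the multiplicity bound (applied once more, this time to $\widetilde{\h}$ restricted to edges of size $\ge 2^J$) with the hypothesis $|h| \le o(n)$. Establishing this uniform-in-$n$ tail estimate is the hard step, and it is precisely where the upper bound $|h| \le o(n)$ plays its essential role in the argument.
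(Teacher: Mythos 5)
Your dyadic decomposition and the Ramsey-type multiplicity bound are both sound, and the observation that $\widetilde{\h}$ inherits Berge-$F$-freeness is correct. But the argument does not close, and the gap is exactly where you flag it: the tail estimate.

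The problem is that Theorem~\ref{gmt_threshold} is a statement for \emph{fixed} uniformity $r$: it says that for each fixed $r \ge R(F,F\setminus e)$ an $r$-uniform Berge-$F$-free hypergraph has $o(n^2)$ edges, where the rate of decay depends on $r$. In your decomposition the top dyadic classes have uniformity $2^j$ of order up to $o(n)$, which is not fixed; for those classes Theorem~\ref{gmt_threshold} gives nothing uniform, and the implied $o(\cdot)$ can deteriorate with $j$. Your proposed repair --- project $\widetilde{\h}_{\ge J}$ to $2^J$-subsets, apply the multiplicity bound and Theorem~\ref{gmt_threshold} to the projection, then multiply back --- only yields $|\widetilde{\h}_{\ge J}| \le (|E(F)|-1)\cdot o(n^2)$, and since each $T$ in the tail can be as large as the $o(n)$ cap, the resulting bound on $\sum_{j\ge J} 4^j|\widetilde{\h_j}| = \sum_{T\in\widetilde{\h}_{\ge J}}|T|^2$ is on the order of $o(n^2)\cdot o(n)^2$, which is $o(n^4)$, not $o(n^2)$. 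The alternative bound from Theorem~\ref{higheruniformity}, $\sum_{|T|\ge 2^J}|T|^2 \le Cn^2$, has a fixed constant and does not shrink as $J$ grows. Neither estimate, nor their interpolation, rules out a scenario such as $\Theta(n^2/r^2)$ edges each of size $r = r(n)\to\infty$, $r = o(n)$, which would give $\sum|h|^2 = \Theta(n^2)$. So the "dominated-convergence" split as stated does not go through; the difficult uniform-in-$n$ control you'd need is essentially the content of the theorem itself.

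The paper sidesteps slicing entirely and applies the graph removal lemma once, to the $2$-shadow $\Gamma(\h)$ of the whole non-uniform hypergraph. The key new estimate is that the number of copies of $F$ in $\Gamma(\h)$ is $o(n^{|V(F)|})$, which is derived from $\sum_{h\in\h}|h|^3 = o(n)\cdot\sum_h|h|^2 = o(n^3)$ --- this is precisely where the hypothesis $|h| \le o(n)$ is used, in tandem with Theorem~\ref{higheruniformity}. Removal then yields a set $\R$ of $o(n^2)$ edges meeting every $F$-copy; a global Ramsey argument (close in spirit to your multiplicity bound) shows that every $R(F,F\setminus e)$-subset of every hyperedge contains a "special" edge of $\R$, so $\Gamma(h)\setminus\R_s$ is $K_R$-free, giving $|\Gamma(h)\cap\R_s| \ge \gamma\binom{|h|}{2}$ by Erd\H{o}s--Stone; double counting against $|\R_s| = o(n^2)$ finishes. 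If you want to salvage a dyadic version, you would need to reprove a removal-type statement uniformly in $j$, at which point you have essentially reconstructed the paper's global argument.
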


Combining Theorem \ref{higheruniformity} and Theorem \ref{loweruniformity}, we can show the sum of the sizes of the edges (i.e., the weight) of a Berge-$F$-free hypergraph is $o(n^2)$ provided all the hyperedges are large enough, presenting another improvement of Theorem \ref{gp_size} and Theorem \ref{gmt_threshold}. In fact, this follows from a much more general theorem (which is presented below) by setting $w(m) = m$.


\begin{thm}
\label{main}
Let $F$ be a fixed graph and let $e\in E(F)$. Let $\h$ be a Berge-$F$-free hypergraph on $n$ vertices such that every edge of $\h$ has size at least $R(F, F \setminus e)$. If $w:\mathbb{Z}_+\rightarrow \mathbb{Z}_+$ is any weight function such that $w(m)=o(m^2)$, then 
\[
\sum_{h \in E(\h)} w(\abs{h}) = o(n^2).
\]

\end{thm}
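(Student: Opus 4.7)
The plan is to combine Theorems \ref{higheruniformity} and \ref{loweruniformity} by splitting the edges of $\h$ according to whether their size is above or below a carefully chosen constant threshold. First I would fix any $\epsilon > 0$ and use the hypothesis $w(m) = o(m^2)$ to pick a constant $M = M(\epsilon, w) \geq R(F, F\setminus e)$ such that $w(m) \leq \epsilon m^2$ for all $m \geq M$. The key point is that $M$ depends only on $\epsilon$ and $w$, not on $n$. I would then split $E(\h)$ into the ``small'' edges $\A_1 := \{h \in E(\h) : |h| \leq M\}$ and the ``large'' edges $\A_2 := \{h \in E(\h) : |h| > M\}$ and estimate each contribution separately.

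For $\A_2$, the pointwise bound $w(|h|) \leq \epsilon |h|^2$ together with Theorem \ref{higheruniformity} (which applies since every edge of $\h$, and hence of $\A_2$, has size at least $R(F, F \setminus e) \geq |V(F)|$) gives
\[
\sum_{h \in \A_2} w(|h|) \;\leq\; \epsilon \sum_{h \in E(\h)} |h|^2 \;=\; O(\epsilon n^2).
\]
For $\A_1$, I would observe that the subhypergraph formed by $\A_1$ is itself Berge-$F$-free, with every edge having size between $R(F, F\setminus e)$ and the constant $M$, and $M$ is trivially $o(n)$. Theorem \ref{loweruniformity} therefore yields $\sum_{h \in \A_1} |h|^2 = o(n^2)$, and in particular $|\A_1| = o(n^2)$. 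Letting $W := \max_{m \leq M} w(m)$, a constant depending only on $\epsilon$ and $w$, I would conclude
\[
\sum_{h \in \A_1} w(|h|) \;\leq\; W \cdot |\A_1| \;=\; o(n^2).
\]

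Adding the two contributions gives $\sum_{h \in E(\h)} w(|h|) \leq O(\epsilon n^2) + o(n^2)$, and since $\epsilon > 0$ is arbitrary, the left-hand side is $o(n^2)$. The argument is essentially a clean dovetailing of the two previous theorems and does not seem to have a genuine obstacle. The only subtlety is ensuring that the threshold $M$ can be chosen as a constant independent of $n$, which is exactly what the assumption $w(m) = o(m^2)$ provides: it lets $M$ be vacuously $o(n)$ so that Theorem \ref{loweruniformity} applies to $\A_1$, while simultaneously making $w(|h|) \leq \epsilon |h|^2$ on $\A_2$ so that Theorem \ref{higheruniformity} controls the large-edge contribution.
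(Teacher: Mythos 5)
Your proof is correct and uses the same basic strategy as the paper: split $E(\h)$ at a size threshold and control the small edges via Theorem~\ref{loweruniformity} and the large edges via Theorem~\ref{higheruniformity}. The only difference is the choice of threshold. You fix $\epsilon>0$ and use a constant $M=M(\epsilon,w)$ with $w(m)\le\epsilon m^2$ for $m\ge M$, then let $\epsilon\to 0$ at the end; the paper instead splits at $n^{1/2}$, a threshold that grows with $n$ but is still $o(n)$, which lets the $o(\cdot)$ notation absorb the $\epsilon$-quantifier automatically (for edges of size $>n^{1/2}$ one has $w(|h|)/|h|^2\le\sup_{m>n^{1/2}}w(m)/m^2\to 0$, giving $o(1)\cdot O(n^2)$). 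Both are valid; the paper's version avoids the explicit $\epsilon$-argument and the auxiliary constant $W=\max_{m\le M}w(m)$, while yours is arguably more transparent about where the little-$o$ comes from. One small remark: your bound on $\A_1$ via $W\cdot|\A_1|$ works, but you could equally apply the uniform bound $w(m)\le Cm^2$ (which holds for all $m$ since $w=o(m^2)$ on $\mathbb{Z}_+$ has only finitely many exceptions to $w(m)\le m^2$) together with $\sum_{h\in\A_1}|h|^2=o(n^2)$, as the paper does, and dispense with $W$ entirely.
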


Before we prove our results, we will comment on some of the specific conditions in Theorem \ref{main}.

Theorem \ref{main} is best possible in the sense that one cannot take a larger weight function. Indeed if $w(m) = \Omega(m^2)$, then considering a single hyperedge of size $n$ shows that the conclusion of Theorem \ref{main} cannot hold. More precisely, this gives a Berge-$F$-free hypergraph $\h$ with $\sum_{h\in \h} w(|h|) \ge w(n)$. On the other hand, for many weight functions with $w(m)=\Omega(m^2)$, the bound $O(w(n))$ is an upper bound on the weight of Berge-$F$-free hypergraphs: Indeed, if the function $\max_{1\leq i\leq n} w(i)/i^2=O\left( w(n)/n^2 \right)$ (which is achieved e.g. if $w(m)/m^2$ is eventually non-decreasing in $m$), then using Theorem \ref{higheruniformity} we have 
\[
\sum_{h \in E(\h)} w(|h|)=\sum_{h \in E(\h)} \frac{w(|h|)}{|h|^2}|h|^2 =O \left(\frac{w(n)}{n^2}\right)\sum_{h \in E(\h)} |h|^2=O(w(n)).
\]

Note that in Theorem \ref{main}, the smallest possible size of edges allowed in $\h$ must grow with the forbidden graph $F$: Indeed, let $r$ be an integer and assume $r\mid n$. Let a vertex set on $n$ vertices be partitioned into $n/r$ singletons and $n/r$ sets of size $r-1$. Let $\h$ be the $r$-uniform hypergraph consisting of all the edges that contain one singleton and one $(r-1)$-set. Then it is easily verified that $\h$ is an $r$-uniform Berge-$K_r$-free hypergraph, but $\sum_{h\in E(\h)}|h|=n^2/r$. In fact, it was shown by Gr\'osz, Methuku and Tompkins \cite{GMT} that there are $(\omega(F)-1)^2$-uniform Berge-$F$-free hypergraphs with $\Omega(n^2)$ edges, where $\omega(F)$ denotes the clique number of $F$. It is an interesting open problem to determine the smallest uniformity when $\Omega(n^2)$ drops to $o(n^2)$.

It is also worth noting that the bound $o(n^2)$ in Theorem \ref{main} is close to being best possible: Erd\H os, Frankl and R\"odl \cite{Erdos_Frankl_Rodl} constructed $r$-uniform hypergraphs with more than $n^{2-\varepsilon}$ hyperedges for any $\varepsilon$, and with the property that there are no 3 hyperedges on $3(r-1)$ vertices. Observe that a Berge-triangle is on at most $3(r-1)$ vertices, hence those hypergraphs are also Berge-triangle-free. 

\vspace{2mm}
 
\textbf{Notation.} In the rest of the paper, we use the following notation. For a set $S$ of vertices, let $\Gamma(S)$ denote the graph whose edge-set is the set of all the pairs contained in $S$. For a hypergraph $\h$, its 2-shadow is the graph whose edge-set is $\Gamma(\h) := \cup_{h \in \h} \Gamma(\{h\})$, i.e. all the edges contained in at least one hyperedge of $\h$. 

\section{Proofs}
We will use the following lemma in our proofs.
\begin{lemma}
	\label{ErdStoneSim}
Let $F$ be a non-empty graph. There exists a constant $\beta = \beta(F) > 0$ such that for any $n \ge |V(F)|$, the  maximum number of edges in an $F$-free graph on $n$ vertices is at most  $(1 - \beta) \binom{n}{2}$. 
\end{lemma}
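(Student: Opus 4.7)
The plan is to deduce the lemma directly from Tur\'an's theorem, which is a significant weakening of the Erd\H os--Stone--Simonovits theorem (to which the lemma's name alludes). Setting $r := \abs{V(F)}$, since $F$ has at least one edge and at most $r$ vertices, $F$ is a subgraph of $K_r$, and hence every $F$-free graph on $n$ vertices is automatically $K_r$-free.

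First I would invoke Tur\'an's theorem to bound the number of edges of any $K_r$-free graph on $n$ vertices by $\left(1 - \frac{1}{r-1}\right)\frac{n^2}{2}$. Next, to compare this with $\binom{n}{2} = \frac{n(n-1)}{2}$, I would use the inequality $\frac{n}{n-1} \le \frac{r}{r-1}$, valid for $n \ge r \ge 2$. A short manipulation then gives
\[
\left(1 - \frac{1}{r-1}\right)\frac{n^2}{2} \;\le\; \frac{r(r-2)}{(r-1)^2}\binom{n}{2} \;=\; \left(1 - \frac{1}{(r-1)^2}\right)\binom{n}{2},
\]
so one may take $\beta = \frac{1}{(r-1)^2}$ when $r \ge 3$. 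The edge case $r = 2$ is trivial, since then $F = K_2$ and an $F$-free graph is edgeless, so any $\beta\in(0,1]$ works.

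Since the lemma is a routine consequence of Tur\'an's theorem, I do not anticipate any real obstacle. The only minor point requiring care is ensuring the bound holds uniformly for every $n \ge \abs{V(F)}$ rather than only in the asymptotic regime; this is precisely what the comparison $\frac{n}{n-1}\le\frac{r}{r-1}$ handles. One could alternatively quote the full Erd\H os--Stone--Simonovits theorem (using $\chi(F)$ when $F$ is non-bipartite, and K\H ov\'ari--S\'os--Tur\'an when $F$ is bipartite), but this seems unnecessarily heavy for the qualitative statement being claimed.
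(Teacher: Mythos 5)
Your proof is correct, and it takes a genuinely different and more elementary route than the paper's. The paper invokes the Erd\H os--Stone--Simonovits theorem with $\chi(F)$, which only gives an asymptotic statement (valid for $n \ge n_0$), and then has to patch the finite range $\abs{V(F)} \le n < n_0$ separately using the crude observation that an $F$-free graph on at least $\abs{V(F)}$ vertices misses at least one edge of $K_n$, finally taking $\beta := \min\{\alpha, 1/\binom{n_0}{2}\}$. Your argument instead embeds $F$ into $K_{\abs{V(F)}}$ and applies Tur\'an's theorem, whose edge bound holds for every $n$; the monotonicity of $n/(n-1)$ then yields the uniform bound $(1-\frac{1}{(r-1)^2})\binom{n}{2}$ directly, with no case split and no unquantified threshold $n_0$. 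Your version is cleaner and gives an explicit $\beta$, at the cost of being numerically weaker (the paper's $\alpha$ can be taken much larger when $\chi(F)$ is small), but since the lemma only asserts existence of some positive $\beta$ this costs nothing. Both proofs are valid; yours is the more self-contained.
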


\begin{proof}
Let us fix a real number $\alpha$ such that $1>\alpha>\frac{1}{\chi(F)-1}$ if $\chi(F)\ge 3$ and $0<\alpha<1$ if $\chi(F)=2$. According to the Erd\H os-Stone-Simonovits theorem there exists an $n_0\ge |V(F)|$ such that any $F$-free graph on $n\ge n_0$ vertices contains at most $(1-\alpha)\binom{n}{2}$ edges. On the other hand, if $|V(F)| \le n < n_0$, then obviously an $F$-free graph contains at most $\binom{n}{2} - 1 \le \left(1 - \frac{1}{\binom{n_0}{2}}\right) \binom{n}{2}$ edges. Therefore, letting $\beta:=\min\left\{\alpha,\frac{1}{\binom{n_0}{2}}\right\}$ proves the lemma.
\end{proof}

\subsection{Proof of Theorem \ref{higheruniformity}}
We will say an edge in $\Gamma(\h)$ is \emph{blue} if it is contained in at most $\abs{E(F)}-1$ hyperedges of $\h$. 

\begin{claim}
\label{blueineveryF}
Every copy of $F$ in $\Gamma(\h)$ contains a blue edge.
\end{claim}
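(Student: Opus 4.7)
The plan is to prove the contrapositive: if some copy of $F$ in $\Gamma(\h)$ contains no blue edge, then $\h$ contains a Berge-$F$ subhypergraph, contradicting our hypothesis. So fix such a copy of $F$, and label its edges $e_1, e_2, \ldots, e_m$, where $m = \abs{E(F)}$. By the definition of ``blue,'' each $e_i$ is contained in at least $m$ distinct hyperedges of $\h$.

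Next I would construct the desired Berge-$F$ greedily. Process the edges $e_1, \ldots, e_m$ in order, and at step $i$ choose a hyperedge $h_i \in E(\h)$ with $e_i \subseteq h_i$ that has not been chosen at any previous step. This choice is always possible, because $e_i$ lies in at least $m$ hyperedges of $\h$ while at most $i-1 \le m-1$ hyperedges have been used in the earlier steps, leaving at least one hyperedge available. The map $e_i \mapsto h_i$ is then an injection from $E(F)$ to $E(\h)$ with $e_i \subseteq h_i$ for every $i$, which exhibits a Berge-$F$ subhypergraph in $\h$, yielding the desired contradiction.

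I do not expect any real obstacle in carrying out this argument; it is just a Hall-type greedy selection. The one point worth flagging is that the specific threshold $\abs{E(F)}-1$ in the definition of blue is chosen precisely so that the pigeonhole at the last step $i=m$ still leaves a free hyperedge. In particular, this claim uses neither the hyperedge-size hypothesis $\abs{h}\ge \abs{V(F)}$ nor Lemma \ref{ErdStoneSim}; those ingredients will only be needed afterwards to bound the number of non-blue edges in $\Gamma(\h)$ in the remainder of the proof of Theorem \ref{higheruniformity}.
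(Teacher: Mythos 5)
Your argument is exactly the paper's: both establish that a copy of $F$ with no blue edge has each graph edge in at least $\abs{E(F)}$ hyperedges, so a greedy (pigeonhole) selection produces $\abs{E(F)}$ distinct representing hyperedges and hence a Berge-$F$, contradicting Berge-$F$-freeness. You simply spell out the greedy step more explicitly than the paper's one-line phrasing.
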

\begin{proof}
Consider a copy of $F$ in $\Gamma(\h)$. If there is no blue edge in $F$ then every edge of $F$ is contained in at least $\abs{E(F)}$ hyperedges of $\h$ by definition, so one can greedily choose different hyperedges representing the edges of $F$. Thus we have a Berge-$F$ in $\h$, a contradiction.
\end{proof}

The following claim bounds the number of blue edges in a hyperedge of $\h$ from below.

\begin{claim}
\label{manyblueedges}
Let $h \in E(\h)$ be a hyperedge. Then there exists a constant $\beta = \beta(F)>0$, such that the number of blue edges in $\Gamma(h)$ is at least $\beta\binom{\abs{h}}{2}$.  
\end{claim}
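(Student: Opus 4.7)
The plan is to consider the subgraph $G$ of $\Gamma(h)$ consisting of the non-blue edges in $\Gamma(h)$, and to show that $G$ must be $F$-free. Then I would apply Lemma \ref{ErdStoneSim} to bound $|E(G)|$, which will immediately give the desired lower bound on the number of blue edges.

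For the first step, note that $\Gamma(h)$ is the complete graph on the vertex set $h$, since every pair of vertices in $h$ is contained in the hyperedge $h$, and in particular $\Gamma(h)$ is a subgraph of $\Gamma(\h)$. Suppose for contradiction that $G$ contains a copy of $F$. Then this copy sits inside $\Gamma(\h)$ and consists entirely of non-blue edges, contradicting Claim \ref{blueineveryF}. Hence $G$ is $F$-free.

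Since $|h| \ge |V(F)|$ by hypothesis, Lemma \ref{ErdStoneSim} yields a constant $\beta = \beta(F)>0$ such that
\[
|E(G)| \le (1-\beta)\binom{|h|}{2}.
\]
Since $\Gamma(h)$ is the complete graph on $|h|$ vertices, the number of blue edges in $\Gamma(h)$ is
\[
\binom{|h|}{2} - |E(G)| \ge \beta \binom{|h|}{2},
\]
proving the claim.

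I do not expect any real obstacle: the key conceptual step is simply the observation that $\Gamma(h)$ is a complete graph sitting inside $\Gamma(\h)$, so an $F$-free condition on non-blue edges inside $\Gamma(h)$ is immediate from Claim \ref{blueineveryF}, and the Erd\H os-Stone-Simonovits input from Lemma \ref{ErdStoneSim} does the rest. The constant $\beta$ here is exactly the one produced by Lemma \ref{ErdStoneSim}, so it depends only on $F$, as required.
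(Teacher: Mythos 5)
Your proof is correct and matches the paper's argument essentially verbatim: both identify the non-blue edges of $\Gamma(h)$ as an $F$-free graph via Claim \ref{blueineveryF} and then apply Lemma \ref{ErdStoneSim} to the complement. No meaningful difference in approach.
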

\begin{proof}
	The graph $\Gamma(h)$ is a clique on $|h|\geq |V(F)|$ vertices, and by Claim \ref{blueineveryF}, the set of blue edges in $\Gamma(h)$ form a graph, the complement of which is $F$-free. Thus, Lemma \ref{ErdStoneSim} guarantees a constant $\beta=\beta(F)>0$ such that there are at least $\beta\binom{\abs{h}}{2}$ blue edges in $\Gamma(h)$.



\end{proof}

Using Claim \ref{manyblueedges}, we have 

\begin{equation}\label{equation sum h choose 2}
\sum_{h \in E(\h)}  \beta\binom{\abs{h}}{2} \le \sum_{h \in E(\h)} \# \{\text{Blue edges in $\Gamma(h)$}\}.
\end{equation}

On the other hand, we have 
\begin{equation}\label{equation On^2}
\sum_{h \in E(\h)} \# \{\text{Blue edges in $\Gamma(h)$}\} \le \# \{\text{Blue edges in $\Gamma(\h)$}\} \cdot (\abs{E(F)}-1) = O(n^2).
\end{equation}

Indeed each blue edge is counted at most $\abs{E(F)}-1$ times in the summation as it is contained in at most $\abs{E(F)}-1$ hyperedges of $\h$. Then combining equations \eqref{equation sum h choose 2} and \eqref{equation On^2}, we have $\sum_{h \in E(\h)}  \beta\binom{\abs{h}}{2}=O(n^2)$ for constant $\beta$, which implies that $\sum_{h \in E(\h)}  \abs{h}^2 \le \sum_{h \in E(\h)} 4\binom{\abs{h}}{2} =O(n^2)$, completing the proof.

\subsection{Proof of Theorem \ref{loweruniformity}}

If $F$ has one or fewer edges, the statement is trivial, so we will assume $F$ has at least two edges throughout the rest of the proof. Here we follow an argument similar to Gr\'osz, Methuku and Tompkins \cite{GMT} but with some important changes. We wish to apply the graph removal lemma to the 2-shadow of a hypergraph $\h$. To this end, we prove the following claim.

\begin{claim}
\label{numberbound}
The number of copies of $F$ in $\Gamma(\h)$ is $o(n^{\abs{V(F)}})$.
\end{claim}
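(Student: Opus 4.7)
The plan is to partition copies of $F$ in $\Gamma(\h)$ into two classes: \emph{local} copies $F'$ with $V(F')\subseteq h$ for some hyperedge $h\in E(\h)$, and \emph{spread-out} copies $F'$ with $V(F')\not\subseteq h$ for every $h\in E(\h)$. I will bound the two classes separately, using Theorem~\ref{higheruniformity} together with the hypothesis $|h|=o(n)$ for the local count, and a Ramsey-plus-matching argument invoking the Berge-$F$-freeness of $\h$ and the hypothesis $|h|\geq R(F,F\setminus e)$ for the spread-out count.

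For the local count, the number of copies of $F$ inside a single hyperedge $h$ is at most the number of injective maps $V(F)\hookrightarrow h$, hence at most $|h|^{|V(F)|}$. By Theorem~\ref{higheruniformity} we have $\sum_{h} |h|^2 = O(n^2)$, and by hypothesis $\max_{h\in E(\h)}|h|=o(n)$. Therefore
\[
\sum_{h \in E(\h)} |h|^{|V(F)|} \;\leq\; \Bigl(\max_{h\in E(\h)} |h|\Bigr)^{|V(F)|-2} \sum_{h\in E(\h)} |h|^2 \;=\; o\bigl(n^{|V(F)|-2}\bigr)\cdot O(n^2) \;=\; o\bigl(n^{|V(F)|}\bigr),
\]
which bounds the number of local copies.

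For the spread-out count, the plan is to show that each spread-out copy $F'$ essentially forces a Berge-$F$ in $\h$, which is impossible. First, one attempts to assign distinct hyperedges to the edges of $F'$ via Hall's theorem applied to the bipartite graph whose parts are $E(F')$ and $E(\h)$ (with an edge whenever $e\subseteq h$); a successful assignment gives a Berge-$F$ and contradicts Berge-$F$-freeness. If Hall's condition fails then some set $S\subseteq E(F')$ is contained collectively in fewer than $|S|$ hyperedges; in this bottleneck at least one of the involved hyperedges $h$ has $|h|\geq R(F,F\setminus e)$, so one can $2$-color the edges of the clique $\Gamma(h)$ according to their multiplicity in $\h$ (say, ``red'' for edges lying in at least $|E(F)|$ hyperedges, ``blue'' otherwise). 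By Ramsey, $\Gamma(h)$ contains either a red copy of $F$ (from which a Berge-$F$ is extracted greedily, as in Claim~\ref{blueineveryF}) or a blue copy of $F\setminus e$ (which, together with a suitable additional edge drawn from $\Gamma(h)$ itself, can be completed to a Berge-$F$). The main obstacle is carrying out this matching-plus-Ramsey step cleanly, in particular ensuring that the distinct hyperedges furnished by Ramsey are compatible with the partial assignment already made by Hall; the condition $|h|\geq R(F,F\setminus e)$ is precisely what makes this Ramsey step available. With spread-out copies thus essentially ruled out, combining with the local bound gives the desired $o(n^{|V(F)|})$ estimate.
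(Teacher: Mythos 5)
The local bound is fine, and matches in spirit the easy part of the argument, but the overall structure misses the paper's key observation and the spread-out half is incorrect.

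The right dichotomy is not ``$V(F')\subseteq h$ for some $h$'' versus ``$V(F')\not\subseteq h$ for all $h$.'' The observation the proof actually needs is: in any copy $F'$ of $F$ in $\Gamma(\h)$, \emph{some hyperedge contains at least two edges of $F'$}, hence at least three vertices of $F'$ --- otherwise, picking for each edge of $F'$ a hyperedge containing it would automatically give distinct hyperedges and hence a Berge-$F$. This is much weaker than ``local,'' and it lets one bound the total count by
\[
\sum_{h\in E(\h)} \binom{|h|}{3}\, n^{|V(F)|-3}\,\binom{\binom{|V(F)|}{2}}{|E(F)|} = O\!\left(n^{|V(F)|-3}\right)\sum_h |h|^3 = o\!\left(n^{|V(F)|}\right),
\]
using $|h|=o(n)$ and Theorem~\ref{higheruniformity} to get $\sum_h|h|^3=o(n^3)$. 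Your ``spread-out'' class contains all the ``semi-local'' copies (three or more vertices in one hyperedge but not all of $V(F')$), and these can be abundant --- of order $\sum_h|h|^3 n^{|V(F)|-3}$, which can far exceed any constant. So any approach that tries to show spread-out copies are ``essentially ruled out'' is doomed from the start; the claim is a counting bound, not a nonexistence statement.

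The Ramsey step also has a specific flaw. You color the edges of $\Gamma(h)$ red if they lie in at least $|E(F)|$ hyperedges and blue otherwise, and invoke $R(F,F\setminus e)$ to get a red $F$ or a blue $F\setminus e$. The red case does give a Berge-$F$, but the blue case does not: blue edges are, by your own definition, in \emph{few} hyperedges (possibly only in $h$ itself), so there is no way to greedily extract $|E(F)|$ distinct hyperedges from a blue $F\setminus e$ plus one more edge. In the paper's Claim~\ref{counting}, the Ramsey coloring is by membership in the removal set $\R$ (not by multiplicity), and the crucial extra ingredient is that the edges of the $F\setminus e$ produced by Ramsey lie in $\R$ but are \emph{not} special --- i.e., they have high multiplicity by assumption --- which is exactly what makes the greedy completion go through. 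Without the removal lemma (which is not available at the stage of proving Claim~\ref{numberbound}, since the removal lemma is applied \emph{after} this claim), and without the ``not special $\Rightarrow$ high multiplicity'' logic, the blue branch simply yields no contradiction. You should discard the Hall/Ramsey spread-out analysis entirely and replace it with the ``three vertices in some hyperedge'' counting observation.
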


\begin{proof}
Any copy of $F$ in $\Gamma(\h)$ has at least two edges (and therefore at least three vertices) in some hyperedge of $\h$, otherwise the hyperedges containing the edges of $F$ would form a Berge-$F$. Thus we have the following upper bound: 
\begin{displaymath}
\# \{\text{$F$-copies in $\Gamma(\h)$}\} \le \sum_{h \in \h} \binom{|h|}{3} n^{\abs{V(F)}-3} \binom{\binom{\abs{V(F)}}{2}}{\abs{E(F)}}\le n^{\abs{V(F)}-3}\binom{\binom{\abs{V(F)}}{2}}{\abs{E(F)}}\sum_{h \in \h} |h|^3.
\end{displaymath}
Indeed, there are $\binom{|h|}{3}$ ways to select three vertices from a hyperedge $h \in \h$ and there are at most $n^{\abs{V(F)}-3}$ ways to select the remaining $\abs{V(F)}-3$ vertices to form a set of $\abs{V(F)}$ vertices. The number of copies of $F$ in this set is bounded by $\binom{\binom{\abs{V(F)}}{2}}{\abs{E(F)}}$.

By our assumption, $|h| = o(n)$, and by Theorem~\ref{higheruniformity}, we have $\sum_{h \in \h} \abs{h}^2=O(n^2)$, so $\sum_{h \in \h} |h|^3 = \sum_{h \in \h} |h| \cdot |h|^2 = o(n) \sum_{h \in \h} |h|^2 = o(n^3)$. Therefore, the number of copies of $F$  in $\Gamma(\h)$ is $o(n^{\abs{V(F)}})$, proving the claim.
\end{proof}

By Claim~\ref{numberbound} and the graph removal lemma, there is a set  $\R$ of $o(n^2)$ edges in $\Gamma(\h)$ such that every copy of $F$ in the 2-shadow of $\h$ contains an edge of $\R$. We will call an edge in the 2-shadow of $\h$ \emph{special} if it is contained in $\R$ and is contained in at most $\abs{E(F)}-1$ hyperedges. Note that the special edges here play a similar but slightly different role than the blue edges in the proof of Theorem \ref{higheruniformity}. Let $\R_s$ be the set of all the special edges. Of course, $\R_s \subseteq \R$.

Recall that $e\in E(F)$, and $R(F,F\setminus e)$ denotes the Ramsey number of $F$ versus $F \setminus e$.

\begin{claim}\label{counting}
	\label{findingonespecialedge}
	Let $h \in E(\h)$ be an arbitrary hyperedge. Then any subset  $S \subseteq h$ of size $R(F,F\setminus e)$ contains a special edge (i.e., $\Gamma(S) \cap \R_s \not = \emptyset$).
\end{claim}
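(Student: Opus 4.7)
The plan is to prove Claim \ref{counting} by contradiction: suppose $\Gamma(S) \cap \R_s = \emptyset$. Since $S \subseteq h$, every edge of $\Gamma(S)$ lies in $\Gamma(h) \subseteq \Gamma(\h)$, hence is in at least one hyperedge. Under our assumption, every edge $uv \in \Gamma(S)$ is non-special, so either $uv \notin \R$ or $uv$ is contained in at least $|E(F)|$ hyperedges of $\h$.

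The main step is a Ramsey argument. Two-color $\Gamma(S)$, a clique on $R(F, F\setminus e)$ vertices, as follows: color an edge \emph{red} if it is not in $\R$, and \emph{blue} if it is in $\R$. By the definition of the Ramsey number we obtain either a red copy of $F$ or a blue copy of $F\setminus e$ in $\Gamma(S)$. The red case is immediately impossible: such a copy of $F$ lies in $\Gamma(\h)$ yet contains no edge of $\R$, contradicting the defining property of $\R$ as a graph removal set.

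So we are left with a blue copy $F'$ of $F\setminus e$ in $\Gamma(S)$, all of whose edges lie in $\R$. Let $u,v \in S$ be the vertices playing the role of the endpoints of $e$, so that $F' \cup \{uv\}$ is a copy of $F$. The edge $uv$ lies in $\Gamma(h)$ because $\{u,v\} \subseteq S \subseteq h$, and every edge of $F'$ is in $\R$ but not special, so each is contained in at least $|E(F)|$ hyperedges of $\h$. We now build a Berge-$F$: assign the hyperedge $h$ to the edge $uv$, and then process the remaining $|E(F)|-1$ edges of $F'$ one at a time, choosing for each a hyperedge of $\h$ that contains it and is distinct from all previously used hyperedges. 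At each step we must avoid at most $|E(F)|-1$ hyperedges, and at least $|E(F)|$ hyperedges are available, so a valid choice exists. The resulting Berge-$F$ contradicts the Berge-$F$-freeness of $\h$.

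The main obstacle I anticipate is finding the correct coloring. A naive coloring by "special vs. non-special" fails because a non-special edge could be non-special simply by being outside $\R$, carrying no information about the number of hyperedges containing it. Splitting instead on membership in $\R$ is the key: it lets the removal property of $\R$ eliminate the monochromatic-$F$ side of Ramsey, while on the $F\setminus e$ side, the non-special assumption automatically upgrades each blue edge to one that sits in many hyperedges. The other crucial trick is to reuse $h$ itself as the hyperedge witnessing the missing edge $e$ of the Berge-$F$, which bypasses any need to know how many hyperedges contain $uv$.
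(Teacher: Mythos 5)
Your proof is correct and takes essentially the same approach as the paper: two-color $\Gamma(S)$ by membership in $\R$, use the removal property of $\R$ to rule out the monochromatic $F$ in the non-$\R$ color, extract a copy of $F\setminus e$ inside $\R$ whose edges are all non-special (hence in at least $|E(F)|$ hyperedges), and build a Berge-$F$ by assigning $h$ to the missing edge and greedily assigning distinct hyperedges to the rest. This matches the paper's argument step for step.
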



\begin{proof}
Assume by contradiction that there is a set $S \subseteq h$ of size $R(F, F \setminus e)$ which contains no special edge. In other words, every edge of $\R$ contained in $S$ is in at least $\abs{E(F)}$ hyperedges. By the definition of $\R$, $\Gamma(S)\setminus \R$ cannot contain a copy of $F$. Applying Ramsey's theorem with the edges of $\Gamma(S)\setminus \R$ colored with the first color and those in $\Gamma(S)\cap\R$ colored with the second, we obtain that $\Gamma(S)\cap\R$ must contain a copy of $F\setminus e$. Let $\hat e$ be an edge contained in $S$ whose addition would complete this copy of $F$. The other edges of this copy of $F$ are each contained in at least $\abs{E(F)}$ hyperedges of $\h$. Thus we can select greedily $\abs{E(F)}$ different hyperedges of $\h$ to represent the edges in this copy of $F$: $h$ itself for $\hat e$, and $\abs{E(F)}-1$ other hyperedges for the rest of the edges of $F$. These hyperedges form a Berge-$F$ in $\h$, a contradiction.
\end{proof}

Now we provide a lower bound on the number of special edges contained in a hyperedge of $\h$.

\begin{claim}
\label{findingmanyspecialedges}
Let $h \in \h$ be a hyperedge. Then there is a constant $\gamma = \gamma(F)$ such that $$\abs{\Gamma(h) \cap \R_s} \ge \gamma\binom{\abs{h}}{2}.$$
\end{claim}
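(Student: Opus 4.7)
The plan is to mirror the structure of Claim \ref{manyblueedges}, replacing the role of $F$ there with $K_R$, where $R := R(F, F\setminus e)$. The key observation is that Claim \ref{findingonespecialedge} is exactly a statement that says the non-special edges within $\Gamma(h)$ cannot form a clique of size $R$.

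First I would translate Claim \ref{findingonespecialedge} into graph-theoretic language. Consider the graph $G$ on vertex set $h$ whose edges are precisely the non-special edges of $\Gamma(h)$ (i.e., edges in $\Gamma(h) \setminus \R_s$). If $G$ contained a clique on some vertex set $S \subseteq h$ with $|S| = R$, then every edge of $\Gamma(S)$ would be non-special, contradicting Claim \ref{findingonespecialedge}. Hence $G$ is $K_R$-free.

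Next I would apply Lemma \ref{ErdStoneSim} with the forbidden graph taken to be $K_R$. Since $|h| \ge R(F,F\setminus e) = R = |V(K_R)|$ by the hypothesis of Theorem \ref{loweruniformity}, the lemma yields a constant $\gamma := \beta(K_R) > 0$, depending only on $F$ (because $R$ depends only on $F$), such that
\[
|E(G)| \le (1-\gamma)\binom{|h|}{2}.
\]
Since $\Gamma(h)$ is the complete graph on $|h|$ vertices, the number of special edges within $\Gamma(h)$ satisfies
\[
|\Gamma(h) \cap \R_s| = \binom{|h|}{2} - |E(G)| \ge \gamma \binom{|h|}{2},
\]
which is the desired conclusion.

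There is no real obstacle: the argument is a direct analogue of Claim \ref{manyblueedges}, with the Ramsey-number threshold $R(F, F\setminus e)$ (rather than $|V(F)|$) playing the role of guaranteeing that the forbidden configuration is a clique of a bounded size. The only point to keep in mind is that the constant $\gamma$ produced by Lemma \ref{ErdStoneSim} depends only on $K_R$ and hence only on $F$, so it is uniform over all hyperedges $h$ of $\h$.
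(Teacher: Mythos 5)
Your proof is correct and follows essentially the same approach as the paper: interpret Claim~\ref{findingonespecialedge} as saying $\Gamma(h)\setminus\R_s$ is $K_{R(F,F\setminus e)}$-free, then invoke Lemma~\ref{ErdStoneSim} to bound its edge count by $(1-\gamma)\binom{|h|}{2}$ and take the complement within $\Gamma(h)$. The only difference is that you spell out the intermediate graph $G$ and the size check $|h|\ge R(F,F\setminus e)$ explicitly, which the paper leaves implicit.
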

\begin{proof}
Claim \ref{findingonespecialedge} implies that $\Gamma(h) \setminus \R_s$ does not contain a complete graph on $R(F, F \setminus e)$ vertices. So by Lemma \ref{ErdStoneSim}, $\Gamma(h) \setminus \R_s$ contains at most $(1-\gamma) \binom{\abs{h}}{2}$ edges for some constant $\gamma = \gamma(F)$. So $\Gamma(h) \cap \R_s$ contains at least $\gamma\binom{\abs{h}}{2}$ edges, as desired.
\end{proof}

Now since $\R_s \subseteq \R$, we have $\abs{\R_s} = o(n^2)$. This fact together with Claim \ref{findingmanyspecialedges} implies the following.

$$\sum_{h \in \h} \gamma\binom{\abs{h}}{2} \le \sum_{h \in \h} \abs{\Gamma(h) \cap \R_s} \le \abs{\R_s} (\abs{E(F)}-1) = o(n^2).$$

Indeed, the sum $\sum_{h \in \h} \abs{\Gamma(\{h\}) \cap \R_s}$ counts each edge of $\R_s$ at most $\abs{E(F)}-1$ times.






\subsection{Proof of Theorem \ref{main}}


Since $w(m) = o(m^2)$ and $w$ is defined only on $\mathbb{Z}_+$, there are only finitely many values of $m$ such that $w(m)>m^2$, and thus $w(m)=O(m^2)$. Let $C$ be a constant such that $w(m) \le C m^2$ for all $m\in\mathbb{Z}_+$. Theorem \ref{loweruniformity} implies that

\begin{equation}
\label{eq:eq1}
\sum_{h \in E(\h): |h| \le n^{1/2}} w(|h|) \le \sum_{h \in E(\h): |h| \le n^{1/2}} C |h|^2  = o(n^2),
\end{equation} simply because $n^{1/2} = o(n)$.  Now since $w(m) = o(m^2)$, Theorem \ref{higheruniformity} implies that
\begin{equation}
\label{eq:eq2}
\sum_{h \in E(\h): |h| > n^{1/2}} w(|h|)  = \sum_{h \in E(\h): |h| > n^{1/2}} o(|h|^2) = o\left(\sum_{h \in E(\h): |h| > n^{1/2}} |h|^2\right) = o(n^2).
\end{equation}

So adding up \eqref{eq:eq1} and \eqref{eq:eq2}, the proof is complete.


\subsection*{Acknowledgements}
We thank J\'ozsef Balogh for suggesting the line of investigation carried out in this paper. 

\vspace{1mm}

The research of Gerbner was supported by the J\'anos Bolyai Research Fellowship of the Hungarian Academy of Sciences and by the National Research, Development and Innovation Office -- NKFIH, grant SNN 116095, grant K 116769 and grant KH 130371.

The research of Methuku was partially supported by the National Research, Development and Innovation Office NKFIH grant K116769.

The research of Palmer was partially supported by University of Montana UGP Grant \#M25460.

\end{document}